\newtheorem{theorem}{Theorem}[section]
\newtheorem{lemma}[theorem]{Lemma}
\newtheorem{proposition}{Proposition}[section]
\theoremstyle{definition}
\newtheorem{definition}[theorem]{Definition}
\newtheorem{example}[theorem]{Example}
\theoremstyle{remark}
\newcommand{\NW}{\operatorname{NW}}
\numberwithin{equation}{section}
\begin{document}

\title[Proof of the Pruning Front Conjecture]{Proof of the Pruning Front Conjecture for 
certain H\'enon parameters}

\author{Valent\'in Mendoza}
\address{Instituto de Matem\'atica e Estat\'istica, 
 Universidade de S\~ao Paulo. Rua do Mat\~ao 1010, Cidade Universitaria, CEP 05508-090, Brazil.}
\curraddr{}
\email{jvalentm@ime.usp.br}
\thanks{Research supported by FAPESP grant 2007/55771-0.}

\subjclass[2010]{Primary }

\keywords{H\'enon maps, pruning fronts, symbolic dynamics.}

\date{Dec 3, 2011.}

\dedicatory{}

\begin{abstract}
The {\em Pruning Front Conjecture} is proved for an open set of H\'enon parameters 
far from unimodal. More specifically, for an open subset of H\'enon parameter space,
 consisting of two connected components one of which intersects the area-preserving locus,
 it is shown that the associated H\'enon maps are prunings of the horseshoe. In particular, 
their dynamics is a subshift of the two-sided two-shift. 
\end{abstract} 

\maketitle

\section{Introduction}

Pruning was introduced by Cvitanovi\'c \cite{Cvi1} to describe the dynamics of the H\'enon (or Lozi) family. 
The main idea of pruning is to describe  H\'enon maps as {\em partially formed} horseshoes or, put another 
way, as horseshoes from which part of the dynamics was destroyed or {\em pruned away}. 
Roughly, the {\em Pruning Front Conjecture (PFC)} states just that: every map in the H\'enon 
family is a pruning of the horseshoe, which is proved here for an open set of 
parameter values far from the degenerate unimodal world. More precisely, it is shown that 
there are two open topological disks in parameter space, one of which intersects the 
area-preserving locus (and contains parameters previously studied by Davis, MacKay and 
Sannami \cite{DavMacSan1}), whose associated H\'enon maps are non-trivial prunings of the horseshoe.    

\smallskip

Pruning can be described as a way  to give a topological description of the dynamics of 
surface homeomorphisms and the mechanism of creation or destruction of chaos in 2-dimensions. 
A means to formalize this idea, due to de Carvalho\footnote{A different way to formalize 
pruning for the Lozi family was introduced by Ishii in \cite{Ishii} which enabled him to
 prove the PFC for that family.}, is to introduce {\em pruning isotopies} which destroy 
pieces of the dynamics of a surface homeomorphism $f_0$ in a controlled way \cite{dCar1}.
Here \textit{to destroy dynamics} means to convert non-wandering points into 
wandering points, and \textit{in a controlled way} means that this destruction occurs
in regions which we can define explicitly in terms of $f_0$. A precise formulation of this is 
as follows:
\begin{definition}\label{def:isotopy}
 Let $D$ be a domain of the plane, $f_0\colon\mathbb{R}^2\rightarrow\mathbb{R}^2$ a homeomorphism
of the plane. Let $\mathbb{P}:=\cup_{i\in\mathbb{Z}}f_0^i(D)$ be the 
saturation of $D$ under $f_0$. An isotopy $f_t\colon f_0\simeq f_1$ is 
a \textit{pruning isotopy} if:
\begin{enumerate}
\item $\operatorname{Supp}(f_t)\subset\mathbb{P}$, and
\item $\NW(f_1)=\NW(f_0)\setminus\mathbb{P}$,
\end{enumerate}
where $\NW(f)$ denotes the non-wandering set of $f$.
\end{definition}
In \cite{dCar1} conditions on $D$ were introduced to ensure the existence of pruning 
isotopies; a disk $D$ satisfying  these conditions is called a \textit{pruning disk}.
\begin{theorem}[Pruning theorem]\label{thm:poda}
 Given a pruning disk $D$ for a homeomorphism $f_0$, there exists a pruning isotopy
associated to $D$.
\end{theorem}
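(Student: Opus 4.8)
\medskip
\noindent\textbf{Proof strategy.}
The plan is to realize the pruning as a single local modification of $f_0$ supported in $D$ which, upon iteration, automatically destroys the dynamics throughout the saturation $\mathbb{P}$ while leaving everything outside $\mathbb{P}$ untouched. Recall that the defining conditions of a pruning disk give $\partial D=\partial^s D\cup\partial^u D$, with $\partial^s D$ an arc of a stable manifold and $\partial^u D$ an arc of an unstable manifold of $f_0$ meeting at two corners, together with the positional conditions controlling the mutual arrangement of the iterates $\{f_0^i(D)\}_{i\in\mathbb{Z}}$: their interiors are pairwise unlinked (so, up to closures, disjoint or nested) and each boundary arc is carried off $\operatorname{int}D$ in the appropriate time direction. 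Working in a chart that straightens $\partial^s D$ and $\partial^u D$ to coordinate segments, and exploiting the local contraction of $f_0$ along $\partial^s D$ and expansion along $\partial^u D$, I would first build a homeomorphism $h\colon\mathbb{R}^2\to\mathbb{R}^2$, equal to the identity off $D$ and joined to the identity by an isotopy $(h_t)_{t\in[0,1]}$ with $h_0=\operatorname{id}$, $h_1=h$ and $\{x:h_t(x)\ne x\}\subset D$ for all $t$, such that $h$ ``pushes $D$ out'' through $\partial^s D$: every point of $D$ is displaced, coherently with the foliation of $D$ by stable arcs, across $\partial^s D$ into a region from which (for the map $f_1$ defined below) the forward orbit leaves $D$ for good.

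Then I would set $f_t:=f_0\circ h_t$, so $f_0\simeq f_1=f_0\circ h$ and $\{x:f_t(x)\ne f_0(x)\}=\{x:h_t(x)\ne x\}\subset D\subset\mathbb{P}$; hence $\operatorname{Supp}(f_t)\subset\mathbb{P}$, which is condition~(1). Since $h$ is supported in $D\subset\mathbb{P}$ and $f_0(\mathbb{P})=\mathbb{P}$, the set $\mathbb{P}$ is fully $f_1$-invariant and $f_1$ coincides with $f_0$ off $D$, in particular off $\mathbb{P}$; and because the discrepancy between $f_1$ and $f_0$ is localized in $D$, the iterate $f_1^n$ differs from $f_0^n$ only on $\bigcup_{i=0}^{n-1}f_0^{-i}(D)$, so an $f_1$-orbit coincides with an $f_0$-orbit until it first meets $D$, and the bi-infinite orbits affected at all are exactly those meeting $\mathbb{P}$.

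It remains to establish condition~(2), $\NW(f_1)=\NW(f_0)\setminus\mathbb{P}$. Outside $\mathbb{P}$, where $f_1=f_0$, I would use the pruning-disk conditions — that the stable and unstable arcs bounding $D$ separate rather than intertwine the dynamics — to show that the recurrence of a point of $\mathbb{R}^2\setminus\mathbb{P}$ can be neither created nor destroyed by excursions through $\mathbb{P}$, giving $\NW(f_1)\setminus\mathbb{P}=\NW(f_0)\setminus\mathbb{P}$. The heart of the matter is $\NW(f_1)\cap\mathbb{P}=\emptyset$: every point of $\mathbb{P}=\bigcup_i f_0^i(D)$ is $f_1$-wandering. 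Such an orbit agrees with an $f_0$-orbit until it first reaches $D$, where $h$ ejects it through $\partial^s D$; the unlinking and nesting conditions on $\{f_0^i(D)\}$ are precisely what is needed to verify that, once ejected, the orbit never re-enters $D$, drifting monotonically through the nested family of iterates and escaping to infinity — at any rate never returning near its starting point — so that it wanders, the backward direction being symmetric with the roles of $\partial^s D$ and $\partial^u D$ interchanged. The main obstacle is exactly this global coherence: one must manufacture the push $h$ so that the local surgery in $D$, combined with the unmodified $f_0$-dynamics everywhere else, genuinely sweeps \emph{all} of $\mathbb{P}$ to wandering points — with no orbit recaptured by re-entering some $f_0^i(D)$, and no collateral damage to $\NW(f_0)\setminus\mathbb{P}$ — and this is where the full strength of the pruning-disk conditions is spent, through careful bookkeeping with the partial order furnished by the nesting of the iterates of $D$.
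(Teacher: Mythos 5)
This theorem is not proved in the paper you are working from: it is quoted from de Carvalho \cite{dCar1}, where the proof occupies a substantial part of a long paper. Your submission is a strategy outline rather than a proof --- the decisive steps are exactly the ones you defer (``one must manufacture the push $h$ so that \dots and this is where the full strength of the pruning-disk conditions is spent''). Beyond the deferral, there are two concrete problems. First, the map $h$ as described cannot exist: a homeomorphism of the plane equal to the identity off $D$ maps $D$ bijectively onto $D$, so it cannot displace ``every point of $D$ \dots across $\partial^s D$'' out of $D$. At best $h$ pushes points of $D$ toward the stable arc $C$ while keeping them inside $D$, after which $f_1=f_0\circ h$ still satisfies $f_1(D)=f_0(D)$, and nothing has yet been destroyed.

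Second, and more seriously, the ``unlinking and nesting'' of the family $\{f_0^i(D)\}_{i\in\mathbb{Z}}$ on which your wandering argument rests is not among the hypotheses. The conditions (\ref{equcond1})--(\ref{equcond2}) constrain only the boundary arcs: $f_0^n(C)\cap D=\emptyset$ and $f_0^{-n}(E)\cap D=\emptyset$ for $n\ge1$, with diameters tending to $0$. They say nothing about $f_0^n(D)\cap D$, and these intersections are typically nonempty --- that is precisely why $D$ contains non-wandering points to be pruned. Consequently ``once ejected, the orbit never re-enters $D$'' does not follow, and the picture of orbits drifting monotonically through a nested family is unsupported. In de Carvalho's construction the isotopy is supported throughout the saturation $\mathbb{P}$, not merely in $D$ (which is why Definition \ref{def:isotopy} demands only $\operatorname{Supp}(f_t)\subset\mathbb{P}$), and the heart of the proof is the global bookkeeping of how the iterates of $D$ meet one another, together with the verification that no \emph{new} non-wandering points are created --- the inclusion $\NW(f_1)\supset\NW(f_0)\setminus\mathbb{P}$ is also nontrivial and is asserted rather than argued in your outline. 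To make this rigorous you should either reproduce de Carvalho's argument or cite \cite{dCar1} as the paper does.
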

In words, a pruning disk $D$  for a homeomorphism  $f_0$ is a topological disk
 for which  it is possible to destroy all of the orbits of $f_0$ which enter $D$, while
 leaving other  orbits untouched.

 The homeomorphism $f_1$ is called \textit{the pruning homeomorphism of $f_0$ associated
 to the disk $D$}. It is important to see that pruning theory gives models to understand
 the topological dynamics of homeomorphisms. If we start with a homeomorphism $f_0$ we 
can construct the \textit{pruning family of $f_0$, $\mathcal{P}(f_0)$}, defined to be 
the closure of the  family of all the homeomorphisms obtained applying the pruning 
theorem to $f_0$ a finite number of times. Depending on $f_0$, the pruning family 
$\mathcal{P}(f_0)$ contains infinitely many different models of dynamics. For example, 
if $f_0=F$ is the Smale horseshoe it can be proved that $\mathcal{P}(F)$ has 
uncountably many topological models. See \cite{dCar1} for these facts.

In this paper, we use pruning to study the H\'enon family \cite{Hen1} which is the
 family of diffeomorphisms of the plane defined by
\begin{equation*}
 H_{a,b}(x,y)=(a-x^2-by,x), \textrm{  } a,b\in\mathbb{R},b>0.
\end{equation*} 
Our work is related to the \textit{Pruning Front Conjecture (PFC)}; which states 
that each H\'enon map can be understood as a partially formed horseshoe, i.e., what
 is obtained from the horseshoe after pruning some orbits:

\textbf{Pruning Front Conjecture (PFC)}.  \textsl{Up to semiconjugacy, the real H\'enon family
is contained in $\mathcal{P}(F)$, that is, for any choice of  real parameter values $a,b$,
the H\'enon map $H_{a,b}$ belongs to $\mathcal{P}(F)$, up semiconjugacy.} 

Many authors have found numerical evidence that for certain choices of parameters $(a,b)$,
 the non-wandering set of $H_{a,b}$ is a subset of the  Smale horseshoe, e.g. \cite{CviGunPro1,DavMacSan1}.
 In particular, Davis, Mackay and Sannami \cite{DavMacSan1} give parameter values $(a_1,b_1)=(5.4,1)$ 
 where the non-wandering set of $H_{a,b}$ seems to be a subshift of finite type. On other hand,
in \cite {Arai1} Arai showed a rigorous computational method to prove hyperbolicity
for certain parameter values in the H\'enon family. These parameter values contain
$(a_1,b_1)$ above and $(a_2,b_2)=(2.25,0.25)$. 

We use the above facts to prove the PFC in some open neighborhoods around $(a_1,b_1)$ and $(a_2,b_2)$. 
Our main theorem is the following.
\begin{theorem}\label{theo:conjecture}
 There exists an open set $A=A_1\cup A_2$, where $A_1$ and $A_2$ are open topological disks 
in the real parameter plane, such that if $(a,b)\in A$ then $H_{a,b}$ is topologically 
conjugate to a pruning homeomorphism of the horseshoe.
\end{theorem}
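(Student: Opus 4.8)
The plan is to split Theorem~\ref{theo:conjecture} into three ingredients and then glue them together. First, one needs uniform hyperbolicity of $H_{a,b}$ on a neighbourhood of its non-wandering set for $(a,b)$ near $(a_1,b_1)$ and $(a_2,b_2)$, together with an explicit identification of the resulting subshift of finite type. Second, one needs to exhibit, \emph{inside} the Smale horseshoe $F$, a finite nested family of pruning disks whose successive prunings via Theorem~\ref{thm:poda} produce a homeomorphism whose non-wandering set realizes exactly that subshift, with the horseshoe's outside dynamics preserved. Third, one needs a rigidity statement: two Smale surface homeomorphisms with conjugate non-wandering sets, the same ambient ``geometric type'', and matching outside dynamics are topologically conjugate. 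Ingredients two and three, taken together, are the heart of the paper; the first is essentially imported from the literature.

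For the first ingredient I would quote Arai \cite{Arai1}: his rigorous interval-arithmetic method shows $H_{a_1,b_1}$ and $H_{a_2,b_2}$ are uniformly hyperbolic. Since uniform hyperbolicity of a locally maximal invariant set that attracts a neighbourhood is an open condition and the H\'enon family varies continuously in $(a,b)$, each of $(a_1,b_1),(a_2,b_2)$ has a neighbourhood of persistence; shrinking to open topological disks yields $A_1$ and $A_2$. On each such disk the map is structurally stable, so the conjugacy class — in particular the transition matrix $M_i$ coding $\NW(H_{a,b})$ — is constant. From Arai's box covering, refined to a genuine Markov family, I would read off $M_i$; for $(a_1,b_1)$ this must agree with the subshift found by Davis, MacKay and Sannami \cite{DavMacSan1}, and for $(a_2,b_2)$ it is computed directly. (A minor point: one passes to the one-point compactification $S^2$, on which $H_{a,b}$ extends with a single source at infinity.)

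For the second ingredient, recall that $F$ acts on a disk $\Delta$ with invariant set conjugate to the full two-shift, so a forbidden word $w$ of $M_i$ picks out a rectangle $R_w\subset\Delta$ of points whose itinerary contains $w$ at a prescribed time. I would fatten $R_w$ along the stable/unstable directions into a topological disk $D_w$ whose boundary is in good position with respect to the invariant foliations, i.e.\ so that $D_w$ is a pruning disk in the sense of \cite{dCar1}, prune via Theorem~\ref{thm:poda}, and iterate over a finite generating set of forbidden words, choosing the order so that each successive disk is still a pruning disk for the partially pruned map. The result is $g_i\in\mathcal{P}(F)$ with $\NW(g_i)$ conjugate to the subshift of finite type $\Sigma_{M_i}$ and with the outside dynamics of $F$ (the source at infinity and the two horseshoe fixed points' manifolds bounding the construction) unchanged. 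For the third ingredient, both the compactified $H_{a,b}$ and $g_i$ are orientation-preserving Smale homeomorphisms of $S^2$ whose chain recurrent set is $\Sigma_{M_i}$ plus a source plus the two boundary fixed points, assembled in the same pattern; encoding each by the geometric type of a Markov partition (transition matrix plus orientation/crossing data of the strip map) and invoking the classification of such homeomorphisms up to conjugacy forces $H_{a,b}$ to be topologically conjugate to the pruning homeomorphism $g_i$ of the horseshoe.

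\textbf{Main obstacle.} It is not automatic that the particular subshift realized by $H_{a,b}$ is realizable by a pruning \emph{front} of the horseshoe, nor that the Markov partition coming from Arai's boxes has the \emph{same} geometric type as the one produced by the pruning construction. I therefore expect the bulk of the argument to be a careful bookkeeping of geometric types: verifying that the forbidden words of $M_i$ organize into a nested family of pruning disks, checking that the strip-crossing/orientation pattern of the pruned horseshoe is exactly the one exhibited by $H_{a,b}$ on its Markov rectangles, and confirming that the outside dynamics agree. The hyperbolicity input is quoted from \cite{Arai1} and the combinatorics of \cite{DavMacSan1} pin down $M_1$, so the genuinely new work is concentrated in matching the ambient topology of the H\'enon map with that of an explicitly pruned horseshoe.
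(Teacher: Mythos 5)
Your proposal takes a genuinely different route from the paper, and in its present form it has a real gap at exactly the point you flag as the ``main obstacle.'' The paper never constructs a Markov partition for $H_{a,b}$, never compares geometric types, and never proves (or needs) an ambient conjugacy on $S^2$. Instead it works through the \emph{complex} H\'enon family: Arai's second paper \cite{Arai2} rigorously places the real parameters in the complex horseshoe locus $\mathcal{H}_0^{\mathbb{C}}$ and computes the monodromy automorphism $\rho(\gamma)\in\operatorname{Aut}(\Sigma_2)$ of a \emph{symmetric} loop $\gamma$ through $(a,b)$; Theorem~\ref{teoarai} then identifies $K^{\mathbb{R}}_{a,b}$ dynamically with $\operatorname{Fix}(\rho(\gamma))$. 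The whole proof of Theorem~\ref{theo:conjecture} consists in observing that for $I_1$ the monodromy is the pruning automorphism $\rho_{2,2}$ (and $\rho_{0,2}$ for $I_2$), that $\operatorname{Fix}(\rho_{N,M})=\Sigma_2\setminus\mathbb{P}_s$ is exactly the symbolic non-wandering set of the pruned horseshoe $f_{N,M}$, and that composing the two conjugacies (Theorem~\ref{teoconjsime}) gives a conjugacy between $H_{a,b}|_{K^{\mathbb{R}}_{a,b}}$ and $f_{N,M}|_{\NW(f_{N,M})}$. The conclusion actually established is this conjugacy of restrictions, not a global one, so the geometric-type rigidity you posit as your third ingredient is entirely sidestepped.

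The concrete gaps in your version are two. First, your identification of the subshift is not rigorous as described: \cite{Arai1} proves uniform hyperbolicity but does not by itself pin down the conjugacy class of $\NW(H_{a,b})$, and the transition matrix of \cite{DavMacSan1} is numerical; ``refining Arai's box covering to a genuine Markov family'' and reading off $M_i$ is precisely the step the paper replaces by the rigorous monodromy computation of \cite{Arai2}. Second, your rigidity step --- that matching transition matrices plus matching geometric types plus matching outside dynamics forces conjugacy, and that the H\'enon Markov partition and the pruned-horseshoe Markov partition do in fact have the same geometric type --- is not bookkeeping but the entire content of the theorem in your formulation, and nothing in the proposal carries it out. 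If completed, your approach would yield a strictly stronger statement (an ambient conjugacy, which is what the theorem literally asserts but the paper does not prove); as written, however, it defers the decisive steps rather than proving them, whereas the paper's complex-analytic detour delivers the symbolic identification essentially for free from \cite{Arai2}.
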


In $\S$\ref{sectionpruningdisk}  we will define  which pruning disks we are going to use. 
In $\S$\ref{pruninghenon}, we present some concepts and results for the complex H\'enon 
family and using Arai's theorem \cite{Arai2}, we show Theorem \ref{theo:conjecture}.

\section{The horseshoe, Pruning theory and shift automorphisms}\label{sectionpruningdisk}
\subsection{The Smale Horseshoe}\label{subs:homoclinic}
In this section we recall the definition of the Smale horseshoe  and set the notation 
to be used. The Smale horseshoe $F$ is a hyperbolic diffeomorphism of the plane whose dynamics 
in its non-wandering  set is conjugate to the shift $\sigma$ in 
$\Sigma_2:=\{\texttt{0},\texttt{1}\}^\mathbb{Z}$. 

Consider the square $Q=[-1,1]\times[-1,1]$ joined to two semi-disk of radii $1/2$  
and centered in $(-1,0)$ and $(1,0)$.  Foliate the square $Q$ with horizontal unstable 
leaves and vertical stable leaves, and begin by choosing the action of $F$ on $Q$ as 
depicted in Figure \ref{figurahorseshoe}. We require 
that $F$ should stretch the unstable leaves uniformly and contract the stable leaves uniformly by 
factors $\lambda$ and $\mu$, respectively, with $\lambda>2$ and $\mu<1/2$.

\begin{figure}
\centering
\includegraphics[width=80mm,height=60mm]{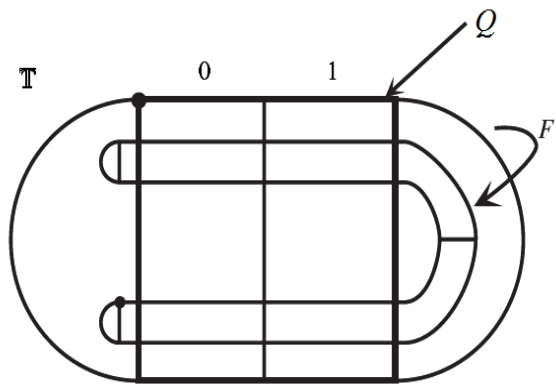}
\caption{Dinamics of $F$.}
\label{figurahorseshoe}
\end{figure}

Then it is possible to prove that  the non-wandering set $\NW(F)$ is a Cantor set in the plane
together with a fixed point $x$ in the left semi-circle. Making a homotopy we can suppose that
 $x$ is the point $\infty$ so the non wandering set is only the Cantor set.
We also suppose that if $z\in\NW(F)$ then $W^s(z)\cap Q$ is a vertical segment 
and $W^u(z)\cap Q$ is a horizontal segment.

 The conjugacy $\vartheta:\NW(F)\rightarrow\Sigma_2$ between the horseshoe $F$ and
the shift $\sigma$ is chosen to be usual one, that is, for any $z=(x,y)$ in $\NW(F)$ 
the symbol sequence associated is $\vartheta(z)=(s_i)_{i\in\mathbb{Z}}$ where:
\begin{equation}
  s_i:=\left\{ \begin{array}{cc}
 \texttt{0} & \textrm{ if $\pi_1(F^{i}(z))<0$}\\
\texttt{1} & \textrm{ if $\pi_1(F^{i}(z))>0$}
\end{array}
\right.
\end{equation}
and $\pi_1$ is the projection on the first coordinate $\pi_1(z):=x$. In the following, we 
identify one point $z\in\NW(F)$ by its \textit{code} or symbolic representation $h(z)$ in $\Sigma_2$.
 When writing elements $s\in\Sigma_2$, it is common to juxtapose a point between $s_{-1}$
 and $s_0$ to indicate the origin of the sequence. 

\subsection{Pruning}
We now present the main definitions and results from \cite{dCar1}. We adapt 
them to the needs of this paper and avoid technical details will not be used here. 

In \cite[Section 1]{dCar1} a pruning disk is defined to be an open topological disk $D$, 
whose closure is a closed topological disk, its boundary can be written as
 $\partial D=C\cup E$, where $C$ and $E$ are arcs which meet only at their endpoints, and
 satisfy the dynamical conditions below plus some technical conditions. 

A  disk $D$ is \emph{a pruning disk} if 
\begin{equation}\label{equcond1}
 F^n(C)\cap D=\emptyset \textrm{   and  } F^{-n}(E)\cap D=\emptyset, \forall n\ge1
\end{equation}
and 
\begin{equation}\label{equcond2}
\lim_{n\rightarrow\infty}\operatorname{diam}(F^n(C))=0 \textrm{ and } \lim_{n\rightarrow\infty}\operatorname{diam}(F^{-n}(E))=0.
\end{equation}

For our purposes, $C$ and $E$ will always be segments of stable, unstable manifolds, respectively, and 
the definition is made using the concept of homoclinic pruning disk below.
\begin{definition}\label{definitiondisk}
A \emph{homoclinic pruning disk} is a open topological disk $D\subset\mathbb{R}^2$ if 
$D$ satisfies (\ref{equcond1}) and (\ref{equcond2}) and $C$ and $E$ satisfy:
\begin{enumerate}
\item[(i)] $E\subset W^u(\texttt{0}^\infty)$ and $C \subset W^s(\texttt{0}^\infty)$,
\item[(ii)] there is an open set $G$ with $\overline{G}\subset D$ and $D\cap\NW(F)\subset G$.
\end{enumerate}
\end{definition}
Observe that the conditions ensure that the stable and unstable manifolds of all points in $\NW(F)$ do not 
accumulate internally on $\partial D$. 
\begin{example}
Let $D$ be the homoclinic disk defined by the homoclinic points 
$p_0=\texttt{0}^\infty\texttt{1111}\cdot\texttt{10110}^\infty$ and
 $p_1=\texttt{0}^\infty\texttt{1110}\cdot\texttt{10110}^\infty$. See Figure \ref{homoclinicdisk}.
 In this case, we need three forward iterates of $p_0$ and $p_1$ to see that, $\forall n>0$,
the segment $F^n(C)$ does not intersect $D$, and three backward iterates to see
that $F^{-n}(E)$ does not intersects $D$.
\begin{figure}
\centering{\includegraphics[width=93mm,height=83mm]{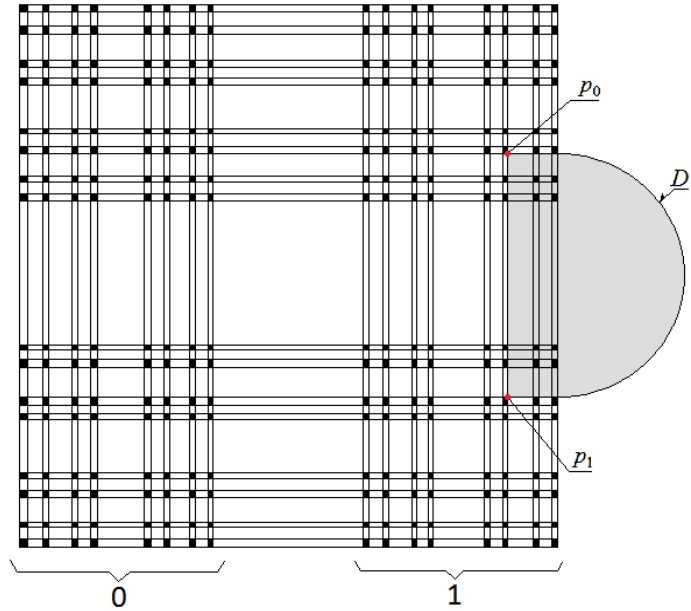}}
\caption{\small{Homoclinic disk.}}\label{homoclinicdisk}
\end{figure}
\end{example}

 We define  \emph{pruning region associated to $D$} to be the open set 
$\mathbb{P}:=\cup_{i=-\infty}^{\infty}F^{i}(D)$. The set
 $\mathbb{P}_s:=\vartheta(\mathbb{P}\cap\NW(F))$ is called the \textit{symbolic pruning region}.

Now let $f_0:=F$ be the horseshoe. Then it follows from \cite{dCar1} that:
\begin{theorem}[Pruning Theorem]\label{theopruning}
 If $D$ is a homoclinic pruning disk, then there exists a pruning isotopy 
$f_t\colon f_0\simeq f_1$ of $f_0$.
 \end{theorem}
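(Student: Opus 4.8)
The plan is to show that every homoclinic pruning disk $D$, in the sense of Definition~\ref{definitiondisk}, is a pruning disk for $F$ in the sense of \cite[Section~1]{dCar1}, and then to invoke the general Pruning Theorem~\ref{thm:poda} with $f_0=F$. The two dynamical conditions (\ref{equcond1}) and (\ref{equcond2}) are already part of Definition~\ref{definitiondisk}, so the only remaining work is to check the unstated technical hypotheses of \cite{dCar1}: that $\partial D=C\cup E$ decomposes into two arcs meeting transversally at their two common endpoints, that $C$ is built from (finitely many) segments of the stable foliation and $E$ from segments of the unstable foliation, and --- the delicate point --- that the stable manifolds of points of $\NW(F)$ do not accumulate on $E$ from inside $D$, and symmetrically that the unstable manifolds of points of $\NW(F)$ do not accumulate on $C$ from inside $D$.

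First I would use hypothesis (i). Since $C\subset W^s(\texttt{0}^\infty)$ and $E\subset W^u(\texttt{0}^\infty)$ with $\texttt{0}^\infty$ a hyperbolic fixed point of $F$, inside $Q$ the arc $C$ lies in stable leaves and $E$ in unstable leaves by the standing assumption that the stable and unstable manifolds of non-wandering points meet $Q$ in vertical, respectively horizontal, segments; outside $Q$ they consist of finitely many arcs. The transversality of $C$ and $E$ at the corners of $D$ is then inherited from the transversality of $W^s(\texttt{0}^\infty)$ and $W^u(\texttt{0}^\infty)$ at the homoclinic points defining $D$, which holds because $\NW(F)$ is a hyperbolic set.

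Next I would use hypothesis (ii): the open set $G$ with $\overline{G}\subset D$ and $D\cap\NW(F)\subset G$ keeps $\NW(F)$ a positive distance from $\partial D$. Combined with the local product structure of the hyperbolic set $\NW(F)$ and with the leafwise description of $C$ and $E$, this forces the stable and unstable manifolds of points of $\NW(F)$ to cross $D$ in segments whose only contact with $\partial D$ is transverse, with no internal accumulation on $C$ or on $E$; this is exactly the content of the Remark following Definition~\ref{definitiondisk}. Having verified every hypothesis of the definition in \cite{dCar1}, Theorem~\ref{thm:poda} applied to $F$ produces a pruning isotopy $f_t\colon f_0\simeq f_1$ with $\operatorname{Supp}(f_t)\subset\mathbb{P}$ and $\NW(f_1)=\NW(f_0)\setminus\mathbb{P}$, as required.

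The main obstacle I anticipate lies in the second and third steps: translating the geometric, foliation-level "non-accumulation" conditions of \cite{dCar1} into statements one can actually verify from the symbolic data defining $D$ (as in the worked Example), together with handling the case in which $C$ and $E$ are unions of several leaf segments rather than single arcs. The dynamical conditions (\ref{equcond1}) and (\ref{equcond2}) themselves cost nothing here, since they are assumed in Definition~\ref{definitiondisk}; the whole argument is a matter of matching definitions and citing \cite{dCar1}.
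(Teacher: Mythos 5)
Your proposal is correct and follows exactly the route the paper intends: the paper offers no written proof of Theorem~\ref{theopruning}, simply asserting that it ``follows from \cite{dCar1}'' because a homoclinic pruning disk satisfies conditions (\ref{equcond1})--(\ref{equcond2}) together with the non-accumulation property noted in the remark after Definition~\ref{definitiondisk}, so Theorem~\ref{thm:poda} applies with $f_0=F$. Your verification of the technical hypotheses of \cite{dCar1} is in fact more detailed than anything in the paper itself.
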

It follows that the dynamics of the pruning map $f_1$ is what remains after removing
the pruning region. Symbolically this is:
\begin{lemma}\label{lemmapruning}
The restriction $\vartheta_s=\vartheta|_{\NW(f_1)}$ is a conjugacy between
 $f_1|_{\NW(f_1)}$ and $\sigma|_{\Sigma_2\setminus\mathbb{P}_s}$.
\end{lemma}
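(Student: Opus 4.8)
The plan is to show that the non-wandering dynamics of the pruned map $f_1$ is captured exactly by the codes that survive outside the symbolic pruning region $\mathbb{P}_s$. First I would recall from Definition~\ref{def:isotopy}(2), applied to the pruning isotopy produced by Theorem~\ref{theopruning}, that $\NW(f_1)=\NW(f_0)\setminus\mathbb{P}$, where $\mathbb{P}=\cup_i F^i(D)$ is the pruning region. Since $f_t$ is an isotopy supported in $\mathbb{P}$ and $\NW(f_1)$ is disjoint from $\mathbb{P}$, on the set $\NW(f_1)$ the map $f_1$ agrees with $f_0=F$: indeed $f_1$ differs from $f_0$ only on $\operatorname{Supp}(f_t)\subset\mathbb{P}$, and by invariance of $\NW(f_1)$ under $f_1$ together with $\NW(f_1)\cap\mathbb{P}=\emptyset$, every orbit of $f_1$ through a point of $\NW(f_1)$ stays in the complement of $\mathbb{P}$, where $f_1=f_0$. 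Hence $f_1|_{\NW(f_1)}=F|_{\NW(f_1)}$ as maps.

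Next I would use that $\NW(f_1)=\NW(F)\setminus\mathbb{P}=\NW(F)\cap(\NW(F)\setminus\mathbb{P})$ and transport everything under the conjugacy $\vartheta\colon\NW(F)\to\Sigma_2$. By definition $\mathbb{P}_s=\vartheta(\mathbb{P}\cap\NW(F))$, so $\vartheta(\NW(f_1))=\vartheta(\NW(F)\setminus\mathbb{P})=\Sigma_2\setminus\mathbb{P}_s$. Since $\vartheta$ is a homeomorphism onto $\Sigma_2$ conjugating $F|_{\NW(F)}$ to $\sigma$, its restriction $\vartheta_s:=\vartheta|_{\NW(f_1)}$ is a homeomorphism onto $\Sigma_2\setminus\mathbb{P}_s$, and because $f_1$ coincides with $F$ on $\NW(f_1)$ we get $\vartheta_s\circ f_1=\vartheta\circ F=\sigma\circ\vartheta=\sigma\circ\vartheta_s$ on $\NW(f_1)$. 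This exhibits $\vartheta_s$ as a topological conjugacy between $f_1|_{\NW(f_1)}$ and $\sigma|_{\Sigma_2\setminus\mathbb{P}_s}$, which is exactly the assertion of Lemma~\ref{lemmapruning}. One should also note in passing that $\Sigma_2\setminus\mathbb{P}_s$ is $\sigma$-invariant, which follows since $\mathbb{P}$ is $F$-invariant and $\vartheta$ intertwines $F$ with $\sigma$.

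The main obstacle is the verification that $f_1$ really restricts to $F$ on $\NW(f_1)$, i.e.\ that no orbit of $f_1$ through a non-wandering point ever enters the support of the isotopy. This is where condition~(1) of Definition~\ref{def:isotopy} ($\operatorname{Supp}(f_t)\subset\mathbb{P}$) and the $f_0$-invariance of $\mathbb{P}$ must be combined carefully with condition~(2); a point of $\NW(f_1)$ lies outside $\mathbb{P}$, but one must argue its whole $f_1$-orbit does, and here the cleanest route is to observe that outside $\mathbb{P}$ the maps $f_0$ and $f_1$ literally agree, so the $f_1$-orbit of such a point is an $f_0$-orbit as long as it stays outside $\mathbb{P}$, and an $f_0$-orbit starting outside the $f_0$-invariant set $\mathbb{P}$ never enters $\mathbb{P}$. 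Everything else is a formal transport of structure along the conjugacy $\vartheta$ and should be routine.
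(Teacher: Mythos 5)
Your argument is correct and is exactly the reasoning the paper leaves implicit: the lemma is stated there without proof, as an immediate consequence of Definition~\ref{def:isotopy} (so that $\NW(f_1)=\NW(F)\setminus\mathbb{P}$ and $f_1=F$ off $\operatorname{Supp}(f_t)\subset\mathbb{P}$, with $\mathbb{P}$ fully $F$-invariant) together with the definition $\mathbb{P}_s=\vartheta(\mathbb{P}\cap\NW(F))$. Your careful verification that the whole $f_1$-orbit of a point of $\NW(f_1)$ avoids $\mathbb{P}$, and the transport along $\vartheta$, fill in the details correctly.
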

Because we are considering only homoclinic disks as in Definition \ref{definitiondisk}, 
the subshifts obtained by this procedure will always be of finite type. More general 
subshifts can be obtained considering general pruning disks but they will not occur here.

The \textit{pruning family} $\mathcal{P}(F)$ is defined to be the closure, 
 in the $C^0$-topology, of the set of  all the prunings of $F$. It can be shown
 that $\mathcal{P}(F)$ contains uncountably many different models of dynamics 
and the Pruning Front Conjecture states that $\mathcal{P}$ contains enough 
models to describe \textit{all} H\'enon maps.

\subsection{Pruning automorphisms}
In this section certain automorphisms of the shift will be discussed.
They will be called pruning automorphisms.

Given $N\in\mathbb{N}$ and $s\in\{0,1\}$ we set $s^N:=\underbrace{s_is_i...s_i}_{N-times}$ 
and $s^0:=\emptyset$ as the empty word. 

Define the homoclinic disk $D_{N,M}$ of the horseshoe $F$ to be the disk that contains the blocks
 corresponding to the symbolic sequences $\texttt{0}^N\texttt{1}\cdot\texttt{01}\texttt{0}^M$ 
and $\texttt{0}^N\texttt{1}\cdot\texttt{11}\texttt{0}^M$. See Figure \ref{figurediskNM}. Then the disk
$D_{N,M}$ is bounded by the stable and unstable segments passing through the homoclinic 
points 
$p_0=\texttt{0}^\infty\texttt{11}\texttt{0}^{N-1}\texttt{1}\cdot\texttt{11}\texttt{0}^{M-1}\texttt{11}\texttt{0}^\infty$ 
and
 $p_1=\texttt{0}^\infty\texttt{11}\texttt{0}^{N-1}\texttt{1}\cdot\texttt{01}\texttt{0}^{M-1}\texttt{11}\texttt{0}^\infty$.

\begin{figure}[h]
\centering{\includegraphics[width=70mm,height=75mm]{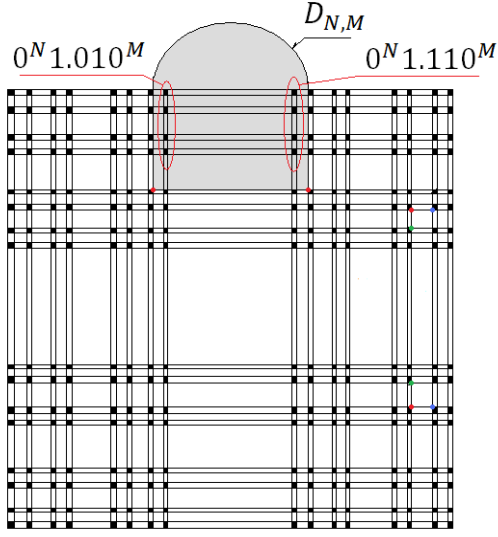}}
\caption{\small{Pruning disk $D_{N,M}$ with $N=1$ and $M=2$.}}\label{figurediskNM}
\end{figure}

\begin{proposition}\label{prophpruningdisk}
 $D_{N,M}$ is a pruning disk, except when $N=0$ and $M=0$; $M=0$ and $N=1$; 
$N=1$ and $M=1$; or, $M=1$ and $N=0$.
\end{proposition}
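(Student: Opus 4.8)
The plan is to verify, directly and by a finite symbolic computation, the four conditions defining a homoclinic pruning disk (Definition \ref{definitiondisk}) for $D=D_{N,M}$: the manifold condition (i), the neighbourhood condition (ii), the non-return condition (\ref{equcond1}), and the shrinking condition (\ref{equcond2}). By construction $\partial D=C\cup E$ with $C$ a sub-arc of $W^s(\texttt{0}^\infty)$ and $E$ a sub-arc of $W^u(\texttt{0}^\infty)$ prescribed by the homoclinic points $p_0,p_1$, so (i) holds at once; and since $F^n(C)$ converges to the fixed point $\texttt{0}^\infty$ as $n\to+\infty$ and $F^{-n}(E)$ converges to it as $n\to+\infty$, we get $\operatorname{diam}(F^n(C))\to0$ and $\operatorname{diam}(F^{-n}(E))\to0$, which is (\ref{equcond2}). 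Condition (ii) then follows from (i), (\ref{equcond1}), (\ref{equcond2}) via the observation after Definition \ref{definitiondisk}: once no stable or unstable leaf of a non-wandering point accumulates internally on $\partial D$, the set $D\cap\NW(F)$ --- which is the union of the two rectangles with addresses $\texttt{0}^N\texttt{1}\cdot\texttt{01}\texttt{0}^M$ and $\texttt{0}^N\texttt{1}\cdot\texttt{11}\texttt{0}^M$ --- is bounded away from $\partial D$, so one may take $G$ to be a slightly fattened copy of it. Thus everything reduces to condition (\ref{equcond1}), together with exhibiting the four failures.

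For (\ref{equcond1}) I would first make a finite reduction. The fixed point $\texttt{0}^\infty$ does not belong to $\overline D$, and for $n$ large the arc $F^n(C)$ lies in an arbitrarily small neighbourhood of $\texttt{0}^\infty$ (symbolically, the codes of its points begin, in forward coordinates, with longer and longer blocks of $\texttt{0}$'s). Reading the threshold off the codes of $p_0,p_1$ one obtains an explicit $n_0=n_0(N,M)$, of order $N+M$, beyond which $F^n(C)\cap D=\emptyset$ automatically, and dually for $F^{-n}(E)$. It then remains to inspect the finitely many iterates $F(C),\dots,F^{n_0}(C)$ and $F^{-1}(E),\dots,F^{-n_0}(E)$, exactly as in the Example following Definition \ref{definitiondisk}, where $n_0=3$ sufficed.

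The combinatorial heart is the analysis of these iterates. A non-wandering point of $F^n(C)$ carries the code of a point of $C$ shifted $n$ places, and it lies in $D$ precisely when that shifted code contains the central window $\texttt{0}^N\texttt{1}\cdot{*}\texttt{1}\texttt{0}^M$ with ${*}\in\{\texttt{0},\texttt{1}\}$. Writing down the codes along $C$ --- each has an eventually-$\texttt{0}^\infty$ future and a past constrained by the endpoints, in particular containing the block $\texttt{11}\texttt{0}^{N-1}\texttt{1}$ dictated by $p_0,p_1$ --- and shifting forward by $n\ge1$, one checks that the window always carries a $\texttt{11}$ or a $\texttt{1}\texttt{0}^k\texttt{1}$ with $k<N$ exactly where membership in $D$ would demand $\texttt{0}^N\texttt{1}$ or $\texttt{0}^M$; this is the obstruction, and the dual one for $E$ comes from the $\texttt{11}\texttt{0}^{M-1}\texttt{11}$ block in the future of $p_0,p_1$. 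The argument needs $N,M$ not too small; when $N$ or $M$ equals $0$ or $1$ the blocks $\texttt{0}^{N-1}$ or $\texttt{0}^{M-1}$ collapse and the obstruction can vanish. One then disposes of the small cases by hand: for $(N,M)=(0,0),(1,0),(1,1),(0,1)$ a direct computation shows either that the construction degenerates or that some low iterate $F(C)$ or $F^{-1}(E)$ re-enters $D_{N,M}$, so (\ref{equcond1}) genuinely fails, while for every other pair the finite inspection goes through. The reflection symmetry $N\leftrightarrow M$ of the horseshoe halves the case work and accounts for the symmetry of the exceptional set.

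The step I expect to be the main obstacle is the bookkeeping imposed by working with arcs rather than with single orbits: ``$F^n(C)\cap D=\emptyset$'' is a statement about a one-parameter family of codes, so one must track the iterates of the endpoints $p_0,p_1$ together with the interior of the arc and use the no-internal-accumulation property to pass from the non-wandering points to the whole arc. Making the threshold $n_0$ explicit, and organising the finitely many symbolic windows so that the generic argument for large $(N,M)$ and the four exceptional computations separate cleanly, is the delicate part; the remainder is routine.
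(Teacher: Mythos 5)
Your proposal is correct and follows essentially the same route as the paper, whose entire argument is the one-line instruction to follow the finitely many relevant iterates of $C$ and $E$ as in the example figure; your reduction to a finite symbolic check of condition (\ref{equcond1}) plus a hand computation for the four small pairs $(N,M)\in\{0,1\}^2$ is just a more explicit version of that. If anything, your write-up supplies more detail (the threshold $n_0$, the treatment of condition (ii), the degenerate blocks $\texttt{0}^{N-1}$, $\texttt{0}^{M-1}$) than the paper does.
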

The proof consists of following the orbits of $C$ and $E$ just as in figure \ref{homoclinicdisk}.
Note that $F(D_{N,M})$ is positioned similarly to the pruning disk of that figure.
\bigskip

Let $f_{N,M}$ be the pruning homeomorphism associated to $D_{N,M}$ as in theorem \ref{theopruning}. 

An automorphism of the shift is a shift commuting homeomorphism from $\Sigma_2$ to $\Sigma_2$.
The set of all the automorphisms of the shift is denoted by $\operatorname{Aut}(\Sigma_2)$, 
and $\operatorname{Fix}(\rho)$ denote the set of fixed points of a automorphism $\rho$.

The definition below associates an automorphism of the shift to a pruning disk $D_{N,M}$.
These are called \textit{pruning automorphisms}.
\begin{definition}\label{def:auto}
 Let $\rho_{N,M}\in\operatorname{Aut}(\Sigma_2)$ be the shift automorphism that interchanges
 the sequences $\texttt{0}^N\texttt{1010}^M$  and $\texttt{0}^N\texttt{1110}^M$:
\begin{equation}
 (\rho_{N,M}(\emph{s}))_i=\left\{\begin{array}{ccc}
\texttt{0} & \textrm{if $s_{i-N-1}...s_i...s_{i+M+1}=\texttt{0}^N\texttt{1110}^M$}\\
\texttt{1} & \textrm{if $s_{i-N-1}...s_i...s_{i+M+1}=\texttt{0}^N\texttt{1010}^M$}\\
s_i & \textrm{otherwise}
\end{array}
\right.
\end{equation}
\end{definition}
In figure \ref{figurediskNM} this exchangies points in $D$ to the left and right of the vertical
center line, among many possible other symmetries. 
The automorphism $\rho_{N,M}$ is called \textit{pruning automorphism associated to the disk $D_{N,M}$}.
 
By the above definition, $\Sigma_2\setminus\mathbb{P}_s=\operatorname{Fix}(\rho_{N,M})$ where 
$\mathbb{P}_s$ is the symbolic pruning front associated to $D_{N,M}$. Hence, by lemma \ref{lemmapruning},
$\vartheta_s\colon\NW(f_{N,M})\rightarrow\operatorname{Fix}(\rho_{N,M})$ is a conjugacy  between 
$f_{N,M}$ restricted to $\NW(F)\setminus\mathbb{P}$ and the shift $\sigma$ restricted to 
$\operatorname{Fix}(\rho_{N,M})$. Therefore, in $\NW(f_{N,M})$, we have the following equation:
\begin{equation}\label{equ:conjugacy}
 \vartheta_s\circ f_{N,M}=\sigma\circ\vartheta_s.
\end{equation}

\section{The H\'enon family}\label{pruninghenon}
\subsection{The horseshoe locus}
The H\'enon family has an extension, also denoted $H_{a,b}$, to $\mathbb{C}^2$, 
with $(a,b)\in\mathbb{C}\times\mathbb{C}^*$. This has been investigated
 by many authors. See, e.g., \cite{BedSmi1,BedSmi2,HubObe1,HubObe2}.
Here we state some definitions and results of real and complex H\'enon maps.

Define
\begin{equation}
 K_{a,b}^\mathbb{C}=\{p\in\mathbb{C}^2|\{H_{a,b}^n(p)\}_{n\in\mathbb{Z}}\textrm{ is bounded }\}
\end{equation}
to be the set of bounded orbits and $K_{a,b}^{\mathbb{R}}:=K_{a,b}^{\mathbb{C}}\cap\mathbb{R}^2$.
 Let $\mathcal{H}^\mathbb{C}$ be the set of $(a,b)\in\mathbb{C}\times\mathbb{C}^*$ such that the 
restriction of $H_{a,b}$ to $K_{a,b}^{\mathbb{C}}$ is topologically conjugate to the full
 2-shift $(\sigma,\Sigma_2)$.
The set $\mathcal{H}^\mathbb{C}$ is called the \textit{horseshoe locus}.
Likewise, let $\mathcal{H}^{\mathbb{R}}$ be the set of
$(a,b)\in\mathbb{R}^2$ such that the restriction of $H_{a,b}$ to $K_{a,b}^{\mathbb{R}}$ is 
topologically conjugate to the full 2-shift. 

Devaney and Nitecki showed that $\mathcal{H}^\mathbb{R}\neq\emptyset$ and contains the set
 \begin{equation}
  \operatorname{DN}:=\{(a,b)\in\mathbb{R}^2|a>(5+2\sqrt{5})(1+|b|)^2/4, b\neq0\};
 \end{equation}
 they also prove that if $(a,b)\in\operatorname{EMP}:=\{(a,c)\in\mathbb{R}^2: a<-(1+|b|)^2/4\}$ 
then $K_{a,b}^{\mathbb{R}}=\emptyset$ (See \cite{DevNit1}).

Hubbard and Oberste-Vorth (\cite{Obe1}) showed that $\mathcal{H}^\mathbb{C}$ contains the set 
\begin{equation}
 \operatorname{HOV}:=\{(a,b)\in\mathbb{C}^2:|a|>2(1+|b|)^2,b\neq0\}
\end{equation}
Observe that $\operatorname{DN}\subset\operatorname{HOV}$.

In \cite{Arai2}, Arai shows that there are parameters values in $(a,b)\in\mathcal{H}^\mathbb{C}\cap\mathbb{R}^2$
such that $K^{\mathbb{R}}\neq\emptyset$ but $H_{a,b}$ restricted to $K^{\mathbb{R}}$ is not 
conjugate to a full horseshoe. Those were called \textit{parameters of type-3}. 
Also, in \cite{BedSmi2}, Bedford and  Smillie exhibited a way to join distinct connected
 components of the \textit{real horseshoe locus}, 
$\mathcal{H}^\mathbb{C}\cap\mathbb{R}^2$, using  loops in $\mathcal{H}^\mathbb{C}$. This
can be accomplished by loops starting in $\operatorname{DN}$.  Let $\mathcal{H}_0^{\mathbb{C}}$ be 
the connected component of $\mathcal{H}^{\mathbb{C}}$ that contains $\operatorname{HOV}$ and take a point 
$(a_0,b_0)\in\operatorname{DN}\subset\mathcal{H}_0^{\mathbb{C}}$. Then there exists a conjugacy 
$h_0:K_{a_0,b_0}^{\mathbb{C}}\rightarrow\Sigma_2$. Let $\gamma(t)$ be a loop in 
$\mathcal{H}_0^{\mathbb{C}}$ with basepoint in $(a_0,b_0)$. By structural stability there
 are conjugacies $h_t$ between $K_{\gamma(t)}^{\mathbb{C}}$ and $\Sigma_2$. 
Since $\gamma(0)=\gamma(1)=(a_0,b_0)$, we can define an automorphism of the 2-shift
\begin{equation}
 \rho(\gamma):=h_1\circ h_0^{-1}\in \operatorname{Aut}(\Sigma_2).
\end{equation}
Thus, $\rho$ sends a loop $\gamma$ to an automorphism in $\operatorname{Aut}(\Sigma_2)$ which only depends on
the homotopy class of the loop,  $[\gamma]$. Therefore, the map:
\begin{equation}
 \rho:\pi_1(\mathcal{H}_0^{\mathbb{C}},(a_0,b_0))\rightarrow\operatorname{Aut}(\Sigma_2)
\end{equation}
given by $[\gamma]\rightarrow \rho(\gamma)$ is a group homomorphism.
The map $\rho$ is called the \textit{mono\-dromy homomorphism}. 

Some results about the horseshoe locus were obtained by Arai in \cite{Arai2}. His main result
 relates to the loops in $\mathcal{H}_0^{\mathbb{C}}$ which pass through real parameter 
values $(a,b)$. 
\begin{theorem}[Arai]\label{teoarai}
 If $(a,b)\in\mathcal{H}_0^{\mathbb{C}}\cap\mathbb{R}^2$, then there exists a loop
 $\gamma:=\overline{\alpha}^{-1}\cdot\alpha$, where $\alpha$ is a path in 
$\mathcal{H}_0^{\mathbb{C}}$ starting in $(a_0,b_0)$ and ending in $(a,b)$ (so that $\gamma(1/2)=(a,b)$),
 such that 
\begin{itemize}
\item[(i)] the image of $K^{\mathbb{R}}_{a,b}$ under $h_{1/2}$ is exactly $\operatorname{Fix}(\rho(\gamma))$, 
 and
\item[(ii)] this identification is dynamical, i.e.
\begin{equation}
\sigma\circ h_{1/2}=h_{1/2}\circ H_{a,b}.
\end{equation}
\end{itemize}
\end{theorem}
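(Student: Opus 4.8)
The plan is to identify the monodromy automorphism $\rho(\gamma)$ with the automorphism of $\Sigma_2$ induced by complex conjugation, and then to read off both conclusions from the fact that the real points of $K^{\mathbb{C}}_{a,b}$ are precisely the fixed points of that involution. Let $\tau\colon\mathbb{C}^2\to\mathbb{C}^2$, $\tau(x,y)=(\overline{x},\overline{y})$, be complex conjugation. Since $(a,b)$ is real the coefficients of $H_{a,b}$ are real, so $H_{a,b}\circ\tau=\tau\circ H_{a,b}$; hence $\tau$ restricts to an involution of $K^{\mathbb{C}}_{a,b}$ whose fixed-point set is $\operatorname{Fix}(\tau)\cap K^{\mathbb{C}}_{a,b}=\mathbb{R}^2\cap K^{\mathbb{C}}_{a,b}=K^{\mathbb{R}}_{a,b}$. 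The same holds at the basepoint $(a_0,b_0)$, but there one gets more: since $(a_0,b_0)\in\operatorname{DN}$, both $K^{\mathbb{R}}_{a_0,b_0}$ and $K^{\mathbb{C}}_{a_0,b_0}$ are conjugate to the full $2$-shift, and a compact shift-invariant subset of $\Sigma_2$ carrying the full shift (entropy $\log 2$) cannot be proper; thus $h_0(K^{\mathbb{R}}_{a_0,b_0})=\Sigma_2$ and therefore $K^{\mathbb{C}}_{a_0,b_0}=K^{\mathbb{R}}_{a_0,b_0}\subset\mathbb{R}^2$. In particular $\tau$ acts trivially on $K^{\mathbb{C}}_{a_0,b_0}$, so $h_0\circ\tau\circ h_0^{-1}=\operatorname{id}$.

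Next I would fix the path. Since $(a,b)\in\mathcal{H}_0^{\mathbb{C}}$, an open connected (hence path-connected) subset of $\mathbb{C}\times\mathbb{C}^*$ containing $\operatorname{HOV}\ni(a_0,b_0)$, there is a path $\alpha$ in $\mathcal{H}_0^{\mathbb{C}}$ from $(a_0,b_0)$ to $(a,b)$; put $\gamma:=\overline{\alpha}^{-1}\cdot\alpha$, where $\overline{\alpha}=\tau\circ\alpha$ is the conjugate path, which (both endpoints being real) also runs from $(a_0,b_0)$ to $(a,b)$, so that $\gamma(1/2)=(a,b)$. Over $\mathcal{H}_0^{\mathbb{C}}$ the hyperbolic set moves continuously and the conjugacy to $\Sigma_2$ can be transported continuously along any path — this is the structural-stability input already used to define $\rho$ — so let $g_t\colon K^{\mathbb{C}}_{\alpha(t)}\to\Sigma_2$ be the continuous family along $\alpha$ with $g_0=h_0$, whence $g_1=h_{1/2}$. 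I would then introduce the \emph{conjugate family} $\overline{g}_t\colon K^{\mathbb{C}}_{\overline{\alpha}(t)}\to\Sigma_2$, $\overline{g}_t(p):=g_t(\overline{p})$, and check that each $\overline{g}_t$ is again a conjugacy (it intertwines $H_{\overline{\alpha}(t)}$ and $\sigma$, using $H_{\overline{\alpha}(t)}\circ\tau=\tau\circ H_{\alpha(t)}$) and that $t\mapsto\overline{g}_t$ is continuous; by the previous paragraph $\overline{g}_0=g_0=h_0$.

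The core is then the monodromy computation. Because $\operatorname{Aut}(\Sigma_2)$ is discrete in the uniform topology — the only automorphism within distance $1$ of the identity is the identity, since disagreement at coordinate $0$ already forces distance $\ge 1$ — a continuous family of conjugacies between $\Sigma_2$-systems is unique once its value at one parameter is prescribed. Hence the continuous family along $\overline{\alpha}$ starting from $h_0$ is exactly $\overline{g}_t$. Write $\theta:=g_1\circ\tau\circ g_1^{-1}\in\operatorname{Aut}(\Sigma_2)$, the automorphism induced by complex conjugation at $(a,b)$ through $h_{1/2}=g_1$; then $g_1=\theta\circ\overline{g}_1$, so the continuous family along the second leg $\overline{\alpha}^{-1}$ of $\gamma$ — which must begin at $g_1$ — is $t\mapsto\theta\circ\overline{g}_t$ traversed backwards, ending at $\theta\circ\overline{g}_0=\theta\circ h_0$. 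Thus the continuous family along $\gamma$ starts at $h_0$ and ends at $\theta\circ h_0$, so $\rho(\gamma)=(\theta\circ h_0)\circ h_0^{-1}=\theta=h_{1/2}\circ\tau|_{K^{\mathbb{C}}_{a,b}}\circ h_{1/2}^{-1}$. Consequently $\operatorname{Fix}(\rho(\gamma))=h_{1/2}\bigl(\operatorname{Fix}(\tau)\cap K^{\mathbb{C}}_{a,b}\bigr)=h_{1/2}(K^{\mathbb{R}}_{a,b})$, which is (i), while (ii) is immediate since $h_{1/2}=g_1$ already intertwines $H_{a,b}$ on $K^{\mathbb{C}}_{a,b}\supset K^{\mathbb{R}}_{a,b}$ with $\sigma$ on $\Sigma_2$.

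The step I expect to be the main obstacle is the structural-stability/holomorphic-motion input underpinning the second paragraph: one needs that over the whole component $\mathcal{H}_0^{\mathbb{C}}$ the set $K^{\mathbb{C}}_{a,b}$ stays hyperbolic and the conjugacies $h_t$ to $\Sigma_2$ genuinely exist and vary continuously along arbitrary paths — so that $\rho$ is well defined and the uniqueness argument above is legitimate. This is a substantial result of the Bedford--Smillie / Hubbard--Oberste-Vorth theory rather than anything elementary; granting it, the remaining work is the bookkeeping above, the only delicate points being the basepoint identity $K^{\mathbb{C}}_{a_0,b_0}=K^{\mathbb{R}}_{a_0,b_0}$ and the verification that $\overline{g}_t$ is the correct continuous branch along $\overline{\alpha}$.
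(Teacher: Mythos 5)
Your proposal is correct and takes essentially the approach the paper indicates: Theorem \ref{teoarai} is imported from Arai \cite{Arai2} rather than proved here, and the paper's only remark on its proof --- that it rests on the conjugacy $H_{a,b}\circ\varphi=\varphi\circ H_{\overline{a},\overline{b}}$ with $\varphi(x,y)=(\overline{x},\overline{y})$ --- is precisely the symmetry your monodromy computation is built on (including the identification $\rho(\gamma)=h_{1/2}\circ\varphi\circ h_{1/2}^{-1}$ and the basepoint fact $K^{\mathbb{C}}_{a_0,b_0}=K^{\mathbb{R}}_{a_0,b_0}$ on $\operatorname{DN}$). The one cosmetic caveat is that your parenthetical entropy justification for the basepoint step should be read as ``a proper closed invariant subset of $\Sigma_2$ omits a word and hence has entropy $<\log 2$,'' which does make the entropy argument valid; otherwise the standard periodic-point count gives the same conclusion.
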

Arai's proof uses the fact that a H\'enon map $H_{a,b}$ is conjugate to 
$H_{\overline{a},\overline{b}}$ by the complex conjugation 
$\varphi(x,y):=(\overline{x},\overline{y})$.

It is natural to ask what automorphisms in the theorem above are realized by real 
H\'enon maps. In the next section, we will observe that among those automorphisms are some
pruning automorphisms.

\section{the application}
Now we prove a theorem that allows one to show that a parameter pair $(a,b)$ corresponds to
a pruning homeomorphism. Theorem \ref{theo:conjecture} follows as a direct application using the
 rigorous numerical work of Arai \cite{Arai1,Arai2}.
\begin{definition}
 Let $\gamma$ be a closed curve in $\mathbb{C}^2$ and $\overline{\gamma}:=\varphi(\gamma)$. 
In the following we suppose
 $\gamma(0)\in\operatorname{DN}$. We say that $\gamma$ is \textit{symmetric} if $\overline{\gamma}=\gamma^{-1}$, 
that is, $\overline{\gamma}(t)=\gamma(1-t),\forall t\in[0,1]$.
When  $\gamma$ is symmetric it follows that $\gamma(1/2)\in\mathbb{R}^2$. 
\end{definition}

Equation (\ref{equ:conjugacy}) gives us a connection between a pruning homeomorphism and 
the possible real dynamics  of $H_{a,b}$ when $(a,b)\in\mathcal{H}_0^{\mathbb{C}}\cap\mathbb{R}^2$ 
in the following way:

\begin{theorem}\label{teoconjsime}
 Let $\rho_{N,M}$ be a pruning automorphism of the shift. 
Suppose that there exists a symmetric curve $\gamma\in\mathcal{H}_0^{\mathbb{C}}$ such 
that $\rho(\gamma)=\rho_{N,M}$  and define $(a,b)=\gamma(1/2)$. Then there
 exists a conjugacy between the H\'enon map $H_{a,b}$, restricted to
 $K_{a,b}^{\mathbb{R}}$, and the pruning homeomorphism $f_{N,M}$ restricted to $\NW(f_{N,M})$.
\end{theorem}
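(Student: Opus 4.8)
The plan is to chain together the two conjugacies that the hypotheses provide and check that they are compatible. On one side, Arai's Theorem~\ref{teoarai}, applied to the symmetric loop $\gamma$ with $\gamma(1/2)=(a,b)$, gives a homeomorphism $h_{1/2}\colon K_{a,b}^{\mathbb{R}}\to\operatorname{Fix}(\rho(\gamma))$ satisfying $\sigma\circ h_{1/2}=h_{1/2}\circ H_{a,b}$; here one uses that a symmetric curve has $\gamma(1/2)\in\mathbb{R}^2$ and that the loop $\gamma=\overline{\alpha}^{-1}\cdot\alpha$ produced by Arai is of exactly the symmetric form, so that $\rho(\gamma)$ equals the prescribed automorphism $\rho_{N,M}$. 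On the other side, Lemma~\ref{lemmapruning} together with the discussion following Definition~\ref{def:auto} gives a homeomorphism $\vartheta_s\colon\NW(f_{N,M})\to\operatorname{Fix}(\rho_{N,M})$ with $\vartheta_s\circ f_{N,M}=\sigma\circ\vartheta_s$, i.e. equation~(\ref{equ:conjugacy}).

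The key step is then to observe that, by the hypothesis $\rho(\gamma)=\rho_{N,M}$, the two target sets literally coincide: $\operatorname{Fix}(\rho(\gamma))=\operatorname{Fix}(\rho_{N,M})$, call it $S\subset\Sigma_2$. Therefore the composition
\begin{equation}
 \Psi:=\vartheta_s^{-1}\circ h_{1/2}\colon K_{a,b}^{\mathbb{R}}\longrightarrow\NW(f_{N,M})
\end{equation}
is a well-defined homeomorphism. I would then verify the conjugacy relation by a direct computation: on $K_{a,b}^{\mathbb{R}}$,
\begin{equation}
 \Psi\circ H_{a,b}=\vartheta_s^{-1}\circ h_{1/2}\circ H_{a,b}=\vartheta_s^{-1}\circ\sigma\circ h_{1/2}=f_{N,M}\circ\vartheta_s^{-1}\circ h_{1/2}=f_{N,M}\circ\Psi,
\end{equation}
where the second equality is Arai's intertwining relation and the third is equation~(\ref{equ:conjugacy}) read as $\vartheta_s^{-1}\circ\sigma=f_{N,M}\circ\vartheta_s^{-1}$ on $S$. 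This shows $\Psi$ is the desired topological conjugacy between $H_{a,b}|_{K_{a,b}^{\mathbb{R}}}$ and $f_{N,M}|_{\NW(f_{N,M})}$.

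A few points need care. First, one must confirm that $\NW(f_{N,M})$ is precisely the domain of $\vartheta_s$, i.e.\ that the pruning isotopy of Theorem~\ref{theopruning} does not leave behind spurious non-wandering points outside $\NW(F)\setminus\mathbb{P}$; this is exactly the content of Lemma~\ref{lemmapruning}, so it may be invoked directly. Second, one should check that the exceptional cases of Proposition~\ref{prophpruningdisk} are not in play, i.e.\ that $D_{N,M}$ really is a pruning disk for the given $(N,M)$, so that $f_{N,M}$ and $\rho_{N,M}$ are defined; in the statement this is implicit in the phrase ``$\rho_{N,M}$ be a pruning automorphism of the shift''. The genuinely delicate ingredient, which I would expect to be the main obstacle if one were not allowed to cite it, is ensuring that the automorphism $\rho(\gamma)$ attached to Arai's symmetric loop coincides with the combinatorially defined $\rho_{N,M}$ of Definition~\ref{def:auto} rather than merely being conjugate to it in $\operatorname{Aut}(\Sigma_2)$; since equality of automorphisms is needed for $\operatorname{Fix}(\rho(\gamma))=\operatorname{Fix}(\rho_{N,M})$ as sets, this is built into the hypothesis ``$\rho(\gamma)=\rho_{N,M}$'', and the whole argument is then just the assembly described above. (The passage from this theorem to Theorem~\ref{theo:conjecture} is separate: it requires exhibiting, via the rigorous computations of~\cite{Arai1,Arai2}, concrete parameter values and symmetric loops realizing suitable $\rho_{N,M}$, and then using openness of $\mathcal{H}_0^{\mathbb{C}}\cap\mathbb{R}^2$ and continuity of the monodromy to get open disks $A_1,A_2$.)
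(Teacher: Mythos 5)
Your proposal is correct and follows essentially the same route as the paper: both apply Arai's Theorem~\ref{teoarai} to the symmetric loop to obtain $h_{1/2}$ onto $\operatorname{Fix}(\rho_{N,M})$, invoke equation~(\ref{equ:conjugacy}) for $\vartheta_s$, and take the composite $(\vartheta_s)^{-1}\circ h_{1/2}$ as the conjugacy. Your explicit verification of the intertwining computation is a detail the paper leaves implicit, but the argument is the same.
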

\begin{proof}
 Let  $h_t$ be the conjugacies between $H_{a,b}$ and $\sigma$ restricted to 
$K_{\gamma(t)}^{\mathbb{C}}$ and $\Sigma_2$, respectively. Then $\rho_{N,M}=h_1\circ (h_0)^{-1}$
 by definition. Let $\alpha:=\gamma|_{[0,1/2]}$. Then 
$\gamma_{N,M}=\overline{\alpha}^{-1}\cdot\alpha$ and by theorem \ref{teoarai} of Arai, 
$h_{1/2}$ is a conjugacy between $H_{a,b}$, restricted to  $K_{a,b}^{\mathbb{R}}$,  and $\sigma$, 
 restricted to $\operatorname{Fix}(\rho_{N,M})$. By equation (\ref{equ:conjugacy}), the 
desired conjugacy  is $(\vartheta_s)^{-1}\circ h_{1/2}$. 
\end{proof}

Now we prove the Theorem \ref{theo:conjecture}.
 \begin{proof}[Proof of Theorem \ref{theo:conjecture}.]
Let $I_1=[5.3125,5.46875]\times\{1\}$ and take $(a,b)\in I_1$. 
By \cite[Theorem 1.1]{Arai1}, these parameter values correspond to hyperbolic H\'enon maps.
 By structural stability there exists a connected open set $A_1$, with $I_1\subset A_1$, such that all its 
maps are conjugate. By \cite[Theorem 4.]{Arai2}, there exists a symmetric curve
 $\gamma_1\in\mathcal{H}_0^{\mathbb{C}}$ such that $\gamma_1(1/2)=(a,b)$ and
$\rho(\gamma_1)=\rho_{2,2}$. Hence, by Theorem \ref{teoconjsime}, $H_{a,b}$, restricted to 
$K_{a,b}^{\mathbb{R}}$, is topologically conjugate to the pruning homeomorphism $f_{2,2}$
 associated to the pruning disk $D_{2,2}$ and restricted to $\NW(f_{2,2})$.

 Let $I_2=[2.21875,2.296875]\times\{0.25\}$. 
 The open set $A_2$ is obtained considering the values $(a,b)\in I_2$.
 The proof follows the same procedure. In this case $H_{a,b}$, restricted to $K_{a,b}^\mathbb{R}$, 
is topologically conjugate to the pruning homeomorphism $f_{0,2}$ associated to the pruning
disk $D_{0,2}$ and restricted to $\NW(f_{0,2})$.
\end{proof}
\section{Further parameters values}
 The technique above can be used to prove further cases of PFC provided the symmetric paths as in 
Theorem \ref{teoconjsime} are found. There is numerical evidence that many other paremeter regions
fall into this proof scheme but this has not been rigorously verified. 
For instance, some parameter values are the following with their
 respective pruning disks.
\begin{table}[h]\small
\begin{center}
\begin{tabular}{|c|c|}
\hline
Parameter values $(a,b)$& Pruning disks \\
\hline
$(3.5,0.55)$ &  $D_{2,3}$  \\
\hline
$(2.766,0.4)$ & $D_{0,3}$ and $D_{1,2}$  \\
\hline
$(2.887,0.4)$ & $D_{1,3}$ and $D_{2,2}$  \\
\hline
$(2.345,0.19)$ & $D_{0,3}$  \\
\hline
\end{tabular}

\end{center}
\end{table}
 
We conjecture that all these parameters are in the horseshoe locus, because, 
numerically, it seems that there are paths in $\mathcal{H}^\mathbb{C}$ joining these
parameters values with points in $DN$: these paths can be found using the program
 \textbf{SaddleDrop}\footnote{SaddleDrop is a program created by J. Hubbard and K. Papadantonakis
to study the complex H\'enon family. It can be found in http://www.math.cornell.edu/~dynamics/SD/index.html} 
and drawing the complex unstable manifold of the fixed point $\texttt{0}^\infty$. 

As an example, consider the parameter values $(a,b)=(2.812,0.4)$. The pruning disks
 conjectured for this map are $D_{1,2}$ and the homoclinic disk $D'$ defined by the homoclinic orbits 
$p_0=\texttt{0}^\infty\texttt{10111}\cdot\texttt{100110}^\infty$ and
$p_1=\texttt{0}^\infty\texttt{10110}\cdot\texttt{100110}^\infty$ (See Figure \ref{homdiskexample}). 
To justify that  $(2.812,0.4)\in\mathcal{H}^\mathbb{C}$, we take
 $9$ points in the complex parameter space which, at least numerically, seem to belong to
 a path joining $(2.812,0.4)$ and the parameter values $(3.149,0.4)\in DN$. To see that 
this path is entirely contained in $\mathcal{H}^\mathbb{C}$, draw (using SaddleDrop) the
 intersection of the set $K_{a,b}$ of bounded orbits with the complex unstable manifold of $\texttt{0}^\infty$. 
These are shown in Figures \ref{complex1} - \ref{complex9}. 
Observing these figures one see what seems to be a Cantor set varying continuously in $\mathbb{C}^2$.
It suggests that these values are in the horseshoe locus.

\begin{figure}
\centering{\includegraphics[width=93mm,height=83mm]{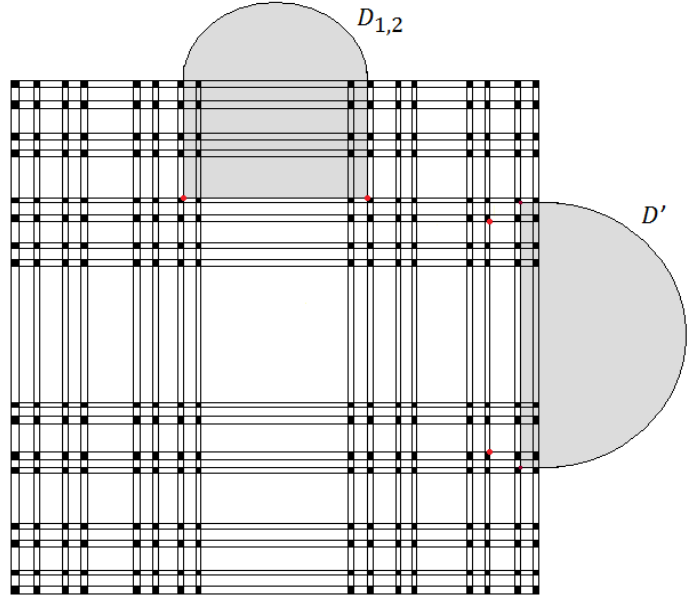}}
\caption{\small{Homoclinic disks conjectured for $H_{2.812,0.4}$}.}\label{homdiskexample}
\end{figure}

\begin{figure}
\centering
\includegraphics[width=90mm,height=40mm]{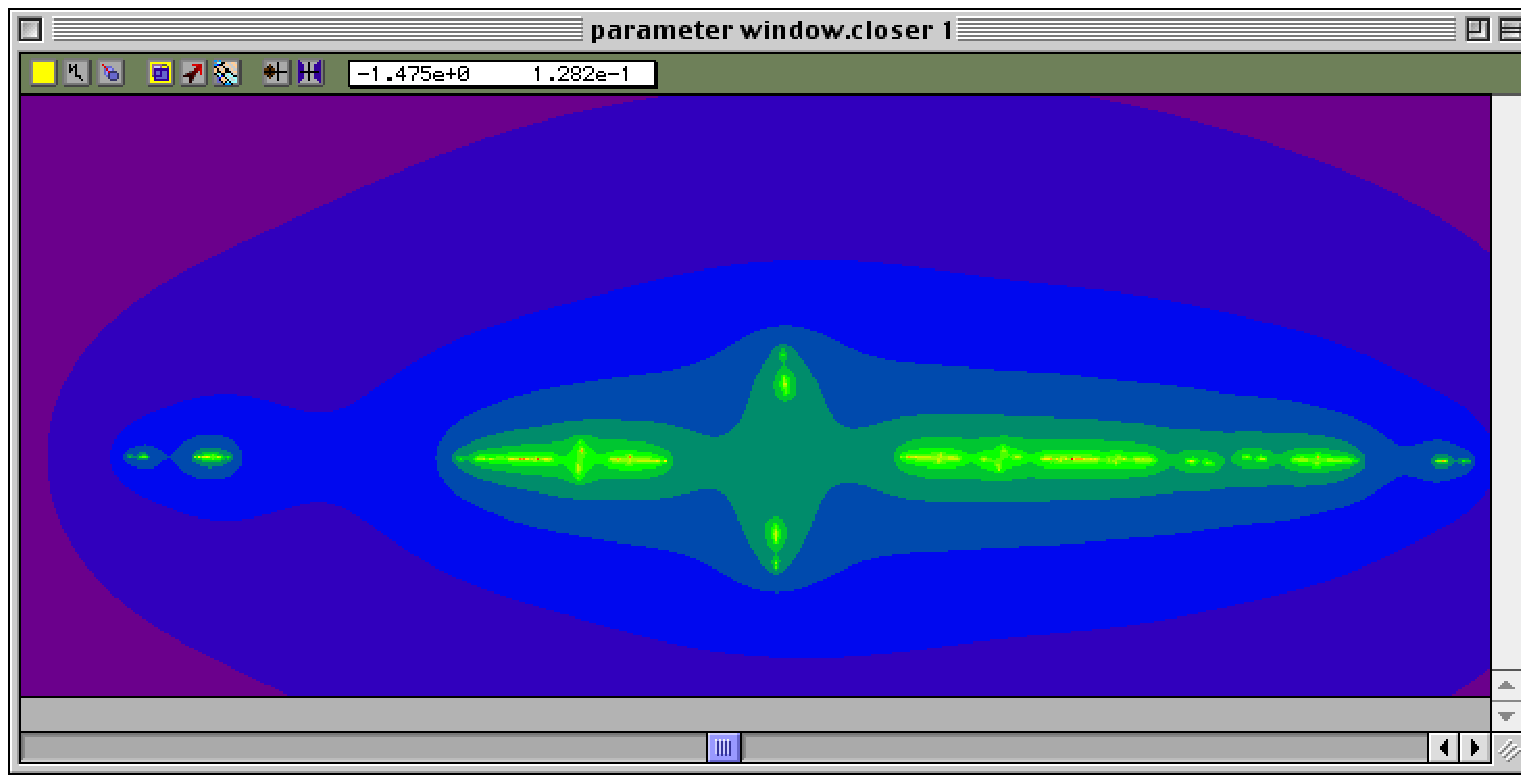}
\caption{$a\approx 2.8187+0.0119i$ and $b=0.4$.}
\label{complex1}
\end{figure}

\begin{figure}
\centering
\includegraphics[width=90mm,height=40mm]{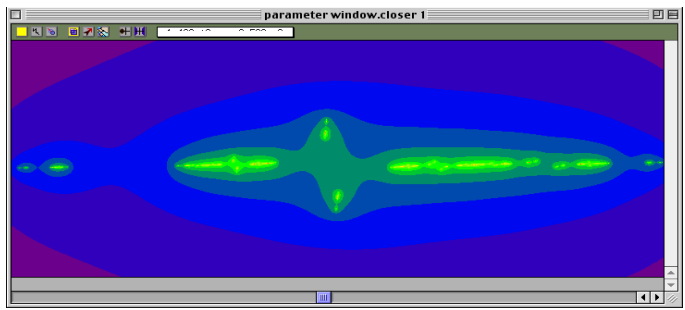}
\caption{$a\approx 2.8255-0.032i$ and $b=0.4$.}
\label{complex2}
\end{figure}

\begin{figure}
\centering
\includegraphics[width=90mm,height=40mm]{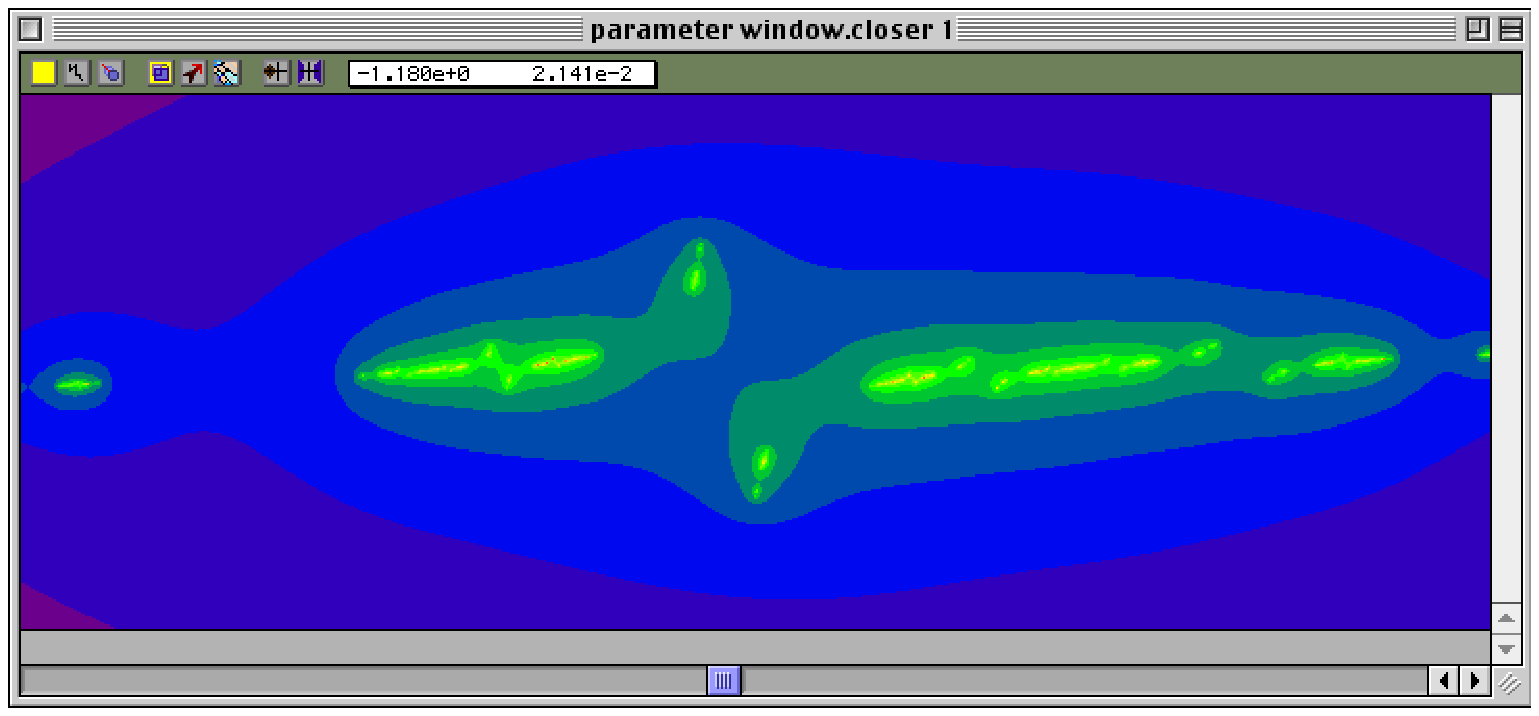}
\caption{$a\approx 2.8187-0.079i$ and $b=0.4$.}
\label{complex3}
\end{figure}

\begin{figure}
\centering
\includegraphics[width=90mm,height=40mm]{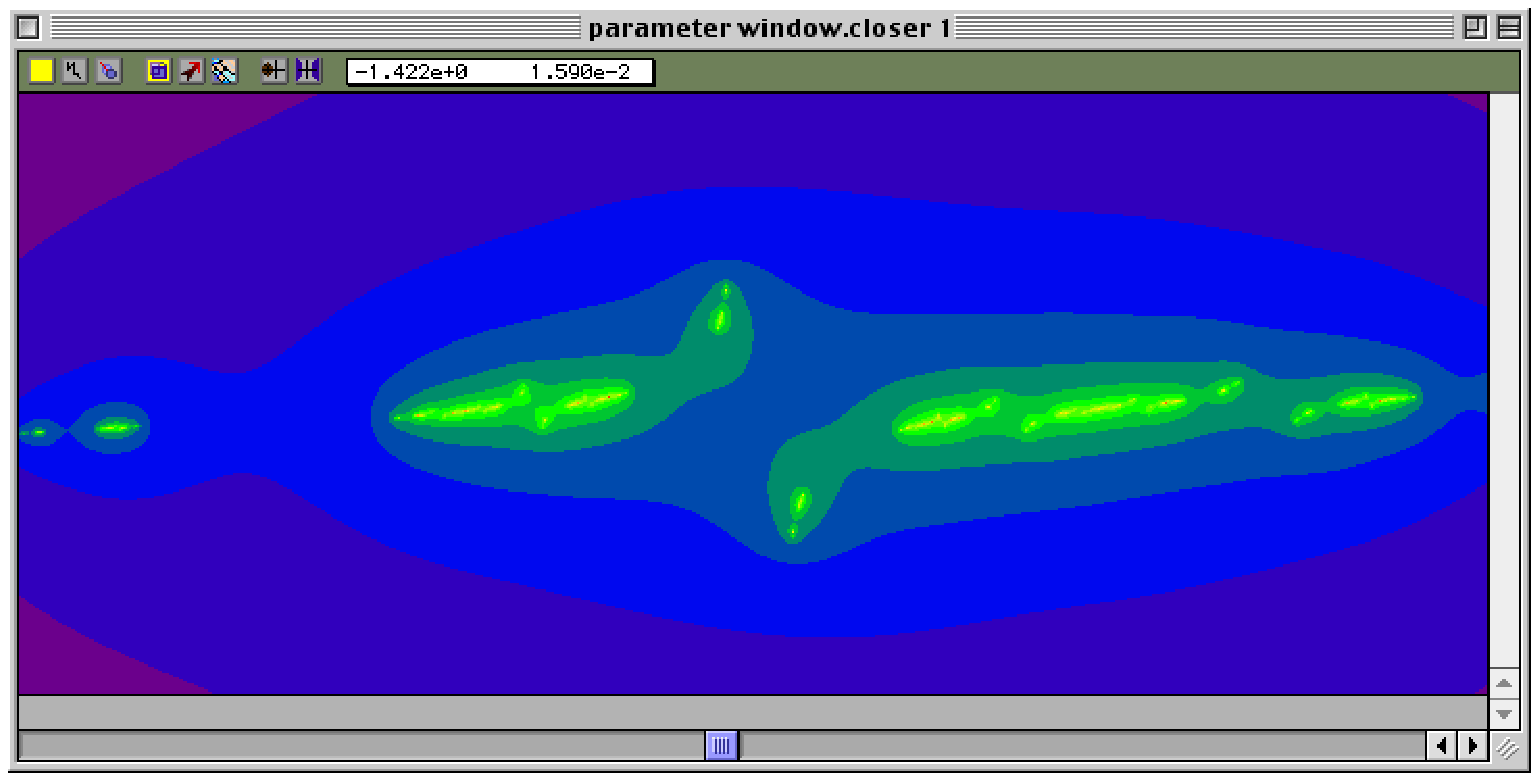}
\caption{$a\approx 2.8187-0.095i$ and $b=0.4$.}
\label{complex4}
\end{figure}

\begin{figure}
\centering
\includegraphics[width=90mm,height=40mm]{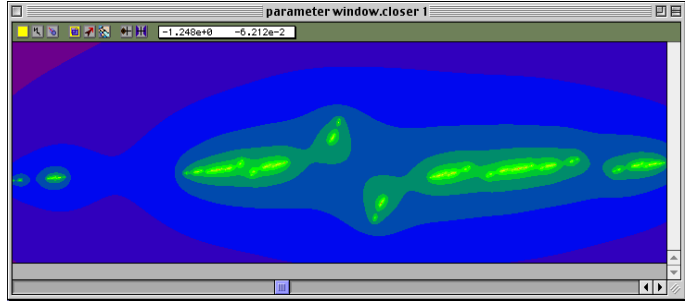}
\caption{$a\approx 2.8931-0.0959i$ and $b=0.4$.}
\label{complex5}
\end{figure}

\begin{figure}
\centering
\includegraphics[width=90mm,height=40mm]{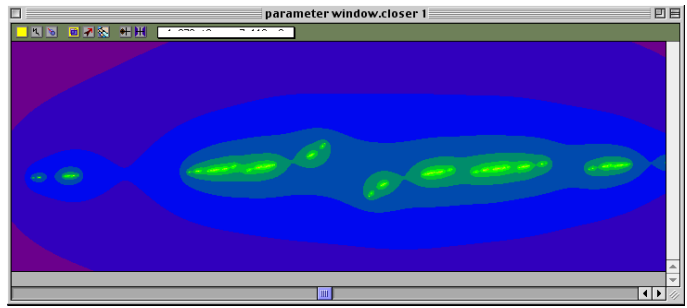}
\caption{ $a\approx 3.00-0.0919i$ and $b=0.4$.}
\label{complex6}
\end{figure}

\begin{figure}
\centering
\includegraphics[width=90mm,height=40mm]{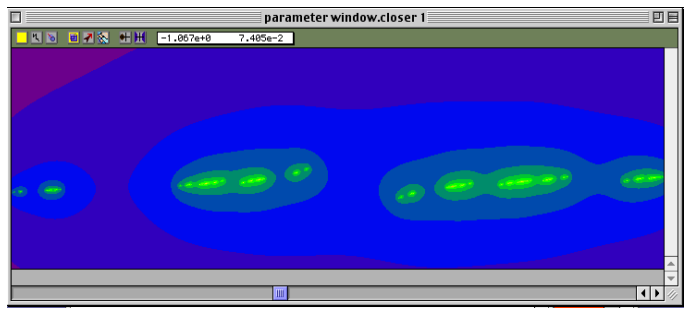}
\caption{$a\approx 3.1432-0.0919i$ and $b=0.4$}
\label{complex7}
\end{figure}

\begin{figure}
\centering
\includegraphics[width=90mm,height=40mm]{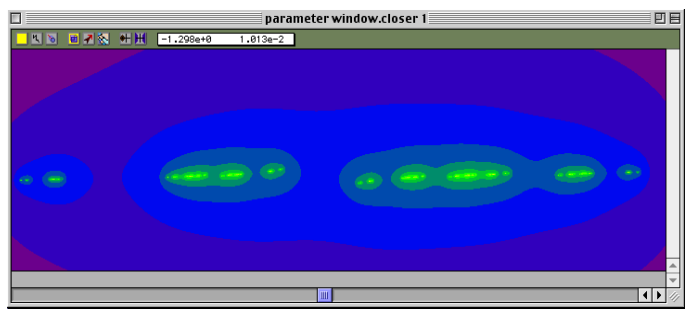}
\caption{$a\approx 3.1432-0.048i$ and $b=0.4$.}
\label{complex8}
\end{figure}

\begin{figure}
\centering
\includegraphics[width=90mm,height=40mm]{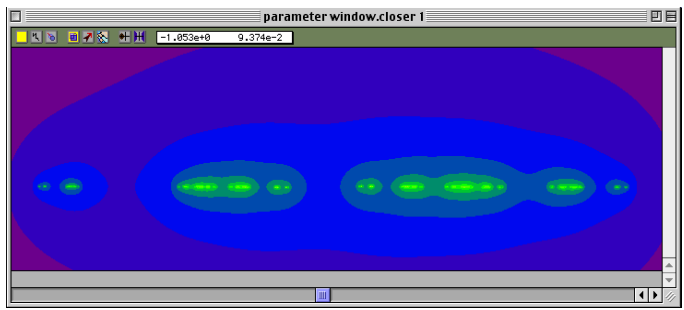}
\caption{$a\approx 3.149-0.00399i$ and $b=0.4$.}
\label{complex9}
\end{figure}
\section{Acknowledgement}

I am grateful to A. de Carvalho for showing me how to prune and
for many suggestions to improve this work.  I would like to thank 
FAPESP for their support, and Peter Hazard for helping me prepare this manuscript. 
I acknowledg the hospitality of the Instituto de Matem\'atica e Estat\'istica IME-USP 
where this work was carried out.


\end{document}